\documentclass[12pt]{article}
\usepackage{amsmath}
\usepackage{amssymb}
\usepackage[english]{babel}
\usepackage{amsthm}
\usepackage{graphicx}
\usepackage{tikz}
\usepackage{enumerate}
\usepackage{faktor}

\newcommand \ran {{\mathrm {ran}}}

\newcommand \Z  {{\mathbb Z}}

\newcommand \MI {{M^{(I)}\!}}

\newtheorem{theorem}{Theorem}
\newtheorem{proposition}[theorem]{Proposition}
\newtheorem{lemma}[theorem]{Lemma}

\newtheorem{fact}[theorem]{Fact}
\newtheorem{claim}{Claim}[theorem]
\newtheorem*{claim*}{Claim}
\theoremstyle{definition}
\newtheorem{definition}[theorem]{Definition}
\newtheorem{example}[theorem]{Example}

\theoremstyle{definition}
\newtheorem{remark}[theorem]{Remark}

\title{Elementary extensions of almost o-minimal structures}
\author{Mourad Berraho\thanks{b.mourad87@hotmail.com} \and 
Akito Tsuboi\thanks{tsuboi@math.tsukuba.ac.jp}}
\date{}
\begin{document}
\maketitle


\begin{abstract}
    This paper investigates almost o-minimal structures, a weakening of o-minimality introduced by Fujita to capture structures that lie outside the classical o-minimal framework.  
In contrast to o-minimality and local o-minimality, almost o-minimality is not preserved under elementary equivalence.  
This raises the natural question of whether every almost o-minimal structure admits a proper elementary extension that is again almost o-minimal.  
The main result of this paper provides an affirmative answer to this question.
\end{abstract}

\section{Introduction}
The notion of o-minimality has revealed a deep connection between logic and real algebraic geometry. However, simple structures such as $(\mathbb{R},+,0, \Z)$ do not fall within this framework. To accommodate such examples, several weaker notions have been introduced that retain some desirable properties of o-minimality while encompassing a broader class of structures. These include locally o‑minimality, definable completeness (DC), and type completeness (TC), which have been studied extensively in \cite{FKK2022}, \cite{kawa2012}, and \cite{Schoutens2014}.
The definition of definable completeness (DC in short) for a structure $M$ is equivalent to the following condition: every bounded definable subset $D\subset M$ has both a supremum and an infimum in $M$. This means that whenever a definable set does not “escape to infinity,” its endpoints are already controlled inside the structure. In particular, any bounded convex definable set must be an interval whose endpoints lie in $M$. This is exactly the kind of tameness one expects in an o‑minimal structure. Thus, in a sense, definable completeness captures the part of o‑minimality that concerns the behavior of definable sets on bounded domains. So, informally, the condition DC asserts that bounded regions resemble an o‑minimal structure. On the other hand, Schoutens introduced the theory DCTC, which generalizes locally o‑minimal expansions of ordered fields (see \cite{Schoutens2014}). 
Here, the additional condition TC (type completeness), applied to the points “at infinity,” ensures that the behavior of definable sets eventually resembles that of an o‑minimal structure. 
More precisely, for any definable set $X\subseteq M$ in a model satisfying TC, there exist $a,b\in M$ such that
$X\cap \{ x<a\} \quad \mathrm{and}\quad X\cap \{ x>b\}$
are either empty or open intervals.
Fujita proposed a different weakening of o‑minimality, introducing the notion of almost o‑minimality. Every locally o‑minimal expansion of $(\mathbb{R},<)$ is almost o‑minimal, but this property is not preserved under elementary equivalence (see \cite{M. Fujita 2022}, Proposition 4.14). At the SAML 2024 Kyoto meeting \cite{4}, Fujita asked the following question:
\begin{itemize}
    \item Does every almost o-minimal structure admit a proper elementary extension?
\end{itemize}
This note provides an affirmative answer to that question.  
The proof is obtained by combining ideas from the proof of Gaifman’s theorem for Peano arithmetic  
with an appropriate variant of Łoś’s theorem.
Our main results contain the following statement:
\medbreak\noindent
{\bf Theorem:} 
Let $M=(M,<,\dots)$ be an almost o-minimal structure. 
Then, there is a proper elementary extension $N \succ M$ that is still almost o-minimal.
\medbreak
The reader is assumed to have a basic knowledge of model theory, including the compactness theorem and the construction of ultraproducts.  
We explain the basic facts about o‑minimality and related notions so that readers unfamiliar with them can follow our arguments, although some acquaintance with these topics may be helpful for understanding the background.

\section{Preliminaries}
Our notation is largely standard.  
The set of non‑negative integers is denoted by \(\omega\), and we use \(m,n,\dots\) to refer to elements of \(\omega\).  
When working in a fixed structure \(M\), we write \(a,b,\dots\) for elements of \(M\), and \(\bar a, \bar b,\dots\) for finite tuples from \(M\).  
By a slight abuse of notation, for a finite tuple \(\bar a = (a_0,\dots,a_{n-1})\), we sometimes write \(\bar a \in M\) to mean that \(a_i \in M\) for all \(i < n\).
Subsets of \(M\) are denoted by capital letters such as \(A,B\).
We use \(A \subset B\) to denote (not necessarily proper) inclusion.
The cardinality of $A$ is denoted by $|A|$. 
For two structures \(M\) and \(N\), we write \(M \prec N\) (or $N \succ M$) to mean that \(M\) is an elementary substructure of \(N\).  
The inclusion is not necessarily proper. 

Formulas are denoted by Greek letters such as \(\varphi, \psi\), and variables by \(x, y, \dots\).  
When the free variables of a formula \(\varphi\) are contained in a tuple \(\bar x\), we often write \(\varphi(\bar x)\).
A subset of \(M\) is called definable if it is of the form
\[
    \{\bar a \in M^n : M \models \varphi(\bar a, \bar b)\}, 
\]
for some formula \(\varphi(\bar x, \bar y)\) and some parameters \(\bar b \in M\).  
The above definable set is often written as $\varphi(M^n,\bar b)$. 
\medbreak
Let $M=(M,<,\dots)$ be a structure expanding a dense linear ordering $(M,<)$ without endpoints. 
\begin{definition}
    \begin{enumerate}
        \item A structure \(M\) is called \emph{o-minimal} if every definable (with parameters) subset \(D \subset M\) is a finite union of (open) intervals and points.
        
        \item A structure \(M\) is called \emph{locally o-minimal} if for every definable set \(D \subset M\) and every \(a \in M\), there exists an interval \(I \ni a\) such that \(D \cap I\) is a finite union of points and intervals.  
In particular, for a sufficiently small left interval \((a_0,a)\), the set \(D \cap (a_0,a)\) is either \((a_0,a)\) or \(\emptyset\); the same holds for a sufficiently small right interval.
        \item A structure \(M\) is called \emph{definably complete} if every bounded definable set \(B \subset M\) has both \(\sup B\) and \(\inf B\) in \(M\).  
    \end{enumerate}
\end{definition}

\begin{remark}
\begin{enumerate}
\item Clearly, every o-minimal structure is both locally o-minimal and definably complete.
\item The property of being o-minimal is elementary; that is, if $M \equiv N$ and $M$ is o-minimal, then $N$ is also o-minimal (see \cite{KPS86}). 
In the proof, the uniform finiteness theorem (Fact \ref{unif-finite}) plays an essential role (see \cite{vdDries1998}).
\item Local o-minimnality and definable completeness are also elementary properties. These facts are relatively straightforward to verify.
For example, in the case of local o‑minimality, elementarity can be verified directly from the “in particular” clause in the definition.
\end{enumerate}
\end{remark}

\begin{fact}[Uniform Finiteness]\label{unif-finite}
Let \(M\) be an o-minimal structure, and let \(D \subset M^{n+1}\) be a definable set satisfying the following property:
\begin{itemize}
    \item[(*)] For every \(\bar a \in M^n\), the fiber
$D_{\bar a} := \{\, b \in M : (\bar a, b) \in D \,\}$  is finite. 
\end{itemize}
Then there exists a uniform bound \(k \in \omega\) such that
$|D_{\bar a}| < k$ for all $\bar a \in M^n$.
\end{fact}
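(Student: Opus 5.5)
The statement to be proved is Fact \ref{unif-finite} (uniform finiteness for o-minimal structures). I will argue by contradiction, using compactness together with the fact that o-minimality is preserved under elementary equivalence (the Remark, citing \cite{KPS86}). The naive route (pass to a saturated elementary extension, find a fiber there with infinitely many points, and contradict o-minimality of the extension) does \emph{not} work. The point is that o-minimality of the extension is itself proved using uniform finiteness. Moreover, an infinite fiber in $N$ is not yet a contradiction: o-minimality allows infinite definable sets, namely intervals. So I will instead give the standard direct argument of Knight--Pillay--Steinhorn / van den Dries, which works inside $M$ by induction on $n$.

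\textbf{Case $n=1$.} Suppose $D\subset M^2$ has all fibers $D_a$ finite, with $a\in M$. For each $k$ the set $\{a : |D_a| = k\}$ is definable. I will show that only finitely many values of $k$ occur. For each $i$, define $f_i(a)$ as the $i$-th element of $D_a$ when it exists; each $f_i$ is a definable partial function.

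The key tool is monotonicity: every definable function $f: M\to M$ is piecewise continuous and monotone on finitely many intervals. Proving monotonicity from o-minimality is the bulk of the work. First show that on some subinterval $f$ is constant or injective. For injective $f$, show local monotonicity by analysing the definable sets $\{x : f$ is locally increasing/decreasing/constant at $x\}$, using that $M$ has no infinite discrete definable subset. Then show continuity in the same way, by a density argument.

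With monotonicity in hand, argue as follows. Call $a$ \emph{good} if there is an interval $I\ni a$ on which $|D_x|$ is constant and the graphs of the $f_i$ on $I$ are continuous and pairwise disjoint. The set of bad points is definable, and using monotonicity and definable completeness one shows it has no interior, hence is finite. On each of the finitely many complementary intervals, connectedness (a definable form, available in o-minimal $M$) makes the number of curves constant. The number of curves is then locally constant, and there are finitely many pieces, so the count is bounded.

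This step hides a circularity. To know that the number $|D_x|$ takes only finitely many values near a bad point, one needs a limit argument near that point. I would handle it by considering $g(x) = |D_x|$, a definable function into $\omega$ whose level sets are definable. For each $k$, the level set $\{g = k\}$ is a finite union of intervals and points. If infinitely many $k$ occurred, pick one point $x_k$ from each level set and use local constancy at good points to derive a contradiction via definable completeness. This is the main obstacle, and I would follow van den Dries's cell decomposition proof, Chapter 3 of \cite{vdDries1998}.

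\textbf{General $n$.} I will prove uniform finiteness simultaneously with cell decomposition, by induction on $n$. Partition $M^n$ into finitely many cells $C$ over which $D$ is given by finitely many continuous definable section functions. The number of such functions is bounded by construction, which yields the bound $k$. The inductive step $(n\to n+1)$ uses the $n=1$ case fiberwise, applied to definable families. The uniformity of the bound on the $n$-dimensional base is exactly what the induction hypothesis supplies.
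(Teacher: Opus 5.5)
The paper gives no proof of this Fact; it only cites Knight--Pillay--Steinhorn and van den Dries, so your outline has to be measured against the standard argument there. You are right that the compactness route is circular, and the architecture you name (monotonicity, a finiteness lemma, simultaneous induction with cell decomposition) is the right one. But the step that carries all the content, that the bad set is finite, is only asserted. Moreover, with your definition of good point, the route ``definable, no interior, hence finite'' is not available. ``$|D_x|$ is constant on $I$ and all $f_i$ are continuous on $I$'' is an infinite disjunction over $k\in\omega$, and it becomes first-order only once a bound on $|D_x|$ is known. So o-minimality cannot be applied to your bad set.

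The missing idea is a local, first-order notion of normality. Call $(a,b)\in M\times(M\cup\{\pm\infty\})$ normal if some box $I\times J$ around it meets $D$ either in $\emptyset$ or in the graph of a continuous function $I\to J$. Call $a$ good if every $(a,b)$ is normal. One must then actually prove two things.
\begin{enumerate}
\item $|D_x|$ is locally constant near a good $a$. Otherwise a definable choice of stray points of $D_x$ as $x\to a$ has, by monotonicity and definable completeness, a limit $(a,y_0)$ with $y_0\in M\cup\{\pm\infty\}$ at which normality fails.
\item The bad set contains no interval. Choose definably the least abnormal point over each bad $x$; it exists because abnormal points form a closed set, after treating $\pm\infty$ separately. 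Make this choice continuous on a subinterval by monotonicity, do the same for the nearest points of $D_x$ above and below it, and conclude that the points on this curve are normal after all.
\end{enumerate}
Your sketch contains neither argument.

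Your ``circularity'' paragraph is both unnecessary and unsound. Once the bad set is finite, each complementary interval $J$ is definably connected. Each level set $\{x\in J:|D_x|=k\}$ is definable and, by local constancy, open and closed in $J$. So $|D_x|$ is constant on $J$ and the bound is immediate. Your repair would not work anyway:
\begin{itemize}
\item $x\mapsto|D_x|$ is not a definable function of $M$; only each level set is definable.
\item The points $x_k$ picked from infinitely many level sets form no definable set, so definable completeness says nothing about them.
\item Infinitely many nonempty level sets contradict nothing by themselves.
\end{itemize}

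Finally, the general case cannot come from the $n=1$ case applied fiberwise. Freezing $n-1$ base coordinates gives a bound that depends on them, and nothing makes it uniform. Likewise, ``partition $M^n$ into cells over which $D$ is given by finitely many sections'' presupposes the bound being proved, since even defining the sections requires the pieces $\{\bar a: |D_{\bar a}|=k\}$ to be finitely many. In the cited proof, the finiteness lemma over an $n$-dimensional base is derived from cell decomposition of $M^n$ by the same normality argument. The bad set has empty interior, the count is constant on open cells by definable connectedness, and lower-dimensional cells are reduced to a smaller base via coordinate projections. Only then is the lemma used to build cell decomposition of $M^{n+1}$.
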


\begin{fact}[\cite{FKK2022}, \cite{KT}] \label{pairwise-conti}
Let $M$ be a locally o-minimal structure with definable completeness, and let $f: M \to M$ be a definable function. Then there exists a definable discrete set $D \subset M$ with the following property: For every pair of consecutive elements $a < b$ in $D \cup \{-\infty, \infty\}$, the restriction $f|_{(a, b)}$ is continuous and is either constant or strictly monotone.
\end{fact}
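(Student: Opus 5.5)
The plan is to follow the classical proof of the Monotonicity Theorem for o-minimal structures. We localize each step using local o-minimality, and we use definable completeness (DC) wherever the o-minimal proof relies on finiteness or on connectedness of intervals.

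First, I would record two structural facts about a locally o-minimal DC structure $M$.
\begin{enumerate}
\item[(a)] Every definable $X\subset M$ with empty interior is closed and discrete. Local o-minimality makes $X$ locally finite near every point, hence discrete. For closedness, suppose $c$ were an accumulation point of $X$; local o-minimality at $c$ would then force $X$ to contain a one-sided interval at $c$, contradicting empty interior.
\item[(b)] Every closed discrete definable set $D$ has the property that each of its points that is not the maximum has an immediate successor in $D\cup\{\infty\}$. This follows by taking $\inf(D\cap(d,\infty))$ and using DC.
\end{enumerate}
In addition, DC implies that every open interval is definably connected. If an interval were the disjoint union of two nonempty definable open sets, then the supremum of a suitable initial segment would lie in neither of them.

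Second, I would prove the local lemma: for every open interval $I$ there is a subinterval $J\subset I$ on which $f$ is continuous and either constant or strictly monotone. The steps mirror the o-minimal case.
\begin{enumerate}
\item[(i)] Either some fiber $f^{-1}(y)\cap I$ has interior, in which case $f$ is constant on a subinterval, or else every fiber has empty interior. In the latter case each fiber is discrete by (a). Then, using DC, the definable map $y\mapsto\inf(f^{-1}(y)\cap I)$ and local o-minimality, we find a subinterval on which $f$ is injective.
\item[(ii)] For injective $f$, each $x$ lies in exactly one of the four definable sets determined by the left and right local behaviour of $f$ at $x$ (locally above or below $f(x)$ on each side). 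That these four sets cover $I$ is exactly the ``in particular'' clause of local o-minimality, applied to $\{t: f(t)>f(x)\}$. Since $I$ has interior and, by (a), a definable set with empty interior is discrete, one of these sets contains a subinterval.
\item[(iii)] The mixed cases, such as a local maximum at every point of an interval, are excluded by the usual argument with suprema, which here uses DC. Then locally increasing on an interval plus definable connectedness gives strictly increasing on that interval.
\item[(iv)] For continuity of a strictly monotone $f$ on $J$, the image $f(J)$ is definable. Either $f(J)$ contains an interval, in which case monotonicity gives continuity on a corresponding subinterval, or $f(J)$ is discrete by (a). A strictly monotone map of an interval onto a discrete set contradicts DC: take the supremum of $f$ on an initial segment.
\end{enumerate}

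Third, I would define the \emph{bad set}
\[
B=\{x\in M : \text{there is no interval around } x \text{ on which } f \text{ is continuous and either constant or strictly monotone}\}.
\]
The set $B$ is definable, and by the local lemma it has empty interior. By (a), $B$ is closed and discrete, and I take $D=B$. By (b), consecutive elements $a<b$ of $D\cup\{-\infty,\infty\}$ make sense. On $(a,b)$ every point is good, so $(a,b)$ is partitioned into three definable open sets according to the local behaviour of $f$: locally constant, locally increasing, or locally decreasing. Definable connectedness forces one of them to be all of $(a,b)$. A final sup argument upgrades ``locally constant'' or ``locally increasing'' to the corresponding global property on $(a,b)$; for example, take $\sup\{t>x : f \text{ is constant on } [x,t)\}$.

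I expect the main obstacle to be step (i)–(ii) of the local lemma, namely extracting an injective piece and then a monotone piece without uniform finiteness (Fact~\ref{unif-finite}), which is unavailable here. In the o-minimal proof, finiteness of definable discrete sets is used implicitly at several points. My approach is to replace every such use by (a), i.e.\ by ``empty interior implies closed discrete'', and by DC-suprema. I would need to check carefully that no argument requires a finite bound on fibers rather than mere discreteness.
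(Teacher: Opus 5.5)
The paper does not prove this Fact; it quotes it from the literature. Judged on its own, most of your plan is sound: (a), (b), definable connectedness, step (ii), the continuity step (iv), and the globalization through the bad set $B$ all work as you describe.

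\textbf{Step (i) is a genuine gap.} In the o-minimal proof, getting from ``all fibers small'' to an injective piece uses twice that an infinite definable set contains an interval: $f(I)$ is infinite, and the image of $y\mapsto\min f^{-1}(y)$ is infinite. That implication fails here. A definable set with empty interior is closed discrete, but it can be infinite and even bounded, e.g.\ the integers below a nonstandard integer in an elementary extension of $(\mathbb{R},<,+,\mathbb{Z})$.

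So you must show that the set of first occurrences $G=\{x\in I: f(t)\neq f(x)\text{ for all } t\in I,\ t<x\}$ has interior. Equivalently, $I$ is not the union of the discrete fibers $f^{-1}(f(g))\cap I$ with $g$ ranging over a closed discrete set. Nothing in (a), (b) or ``take a supremum'' gives this. This is exactly where ``replace finite by discrete'' breaks down, since a discretely indexed union of discrete sets need not be discrete.

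One repair is a definable induction. Take $I$ bounded and suppose $G$ has empty interior. Then $G$ is bounded closed discrete, so by DC every nonempty definable subset of $G$ has a least element and every non-minimal element has a predecessor in $G$. Put $h(t)=\min(f^{-1}(f(t))\cap I)$, and let $g$ be least in $G$ such that $\{t\in I: h(t)\le g\}$ has interior ($\max G$ qualifies). That set is the union of the corresponding set for the predecessor of $g$, which is closed discrete by minimality and (a), and the single discrete fiber $f^{-1}(f(g))\cap I$. Hence it is discrete, a contradiction.

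Simpler still, drop (i) entirely. Local o-minimality at $x$, applied to $\{f>f(x)\}$, $\{f=f(x)\}$ and $\{f<f(x)\}$, gives every $x$ one of nine one-sided types without any injectivity. One type set contains an interval. The types with ``$=$'' on exactly one side are contradictory on an interval, and ``$=$'' on both sides gives constancy by connectedness.

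\textbf{Step (iii) needs to be written out.} Ruling out a strict local extremum at every point of an interval is the heart of the matter (a definable Baire-type statement), not a routine transfer, so ``the usual argument with suprema'' is not enough. It does go through, for instance as follows.
\begin{enumerate}
\item Suppose every point of $J$ is a strict local maximum and fix $x_0\in J$.
\item Apply local o-minimality at $x_0$ to $\{x>x_0: f(x)>f(t)\text{ for all } t\in(x_0,x)\}$. This set is empty near $x_0$; otherwise $f$ would be strictly increasing on a right interval at $x_0$, contradicting strict local maxima.
\item An infimum argument using DC then shows that $\sup\{f(t): t\in(x_0,x)\}$ is a constant $s_0$ for $x$ near $x_0$. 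Local o-minimality applied to $\{f=s_0\}$ gives $f<s_0$ on a right interval at $x_0$.
\item Fix $x'$ in that interval. Note that $f$ attains its maximum on closed subintervals, via one-sided limits, which exist by DC and local o-minimality.
\item Consider the right-to-left records $R=\{y\in(x_0,x']: f(y)>f(t)\text{ for all } t\in(y,x']\}$. For every $e\in(x_0,x')$, pick $y\in(x_0,e)$ with $f(y)>\max_{[e,x']}f$. The rightmost maximizer of $f$ on $[y,x']$ then lies in $R\cap(x_0,e)$, so $R$ accumulates at $x_0$.
\item But $R$ is definable with empty interior (on any interval inside it $f$ would be strictly decreasing), so by (a) it is closed discrete and cannot accumulate at $x_0$. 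This is a contradiction.
\end{enumerate}
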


The following definition is due to Fujita. 
\begin{definition}[\cite{4}]
A structure $M=(M,<,\dots)$ is called almost o-minimal, if all bounded definable subsets of $M$ can be written as a finite union of points and intervals. 
\end{definition}

\begin{example}
Let $M = (\mathbb{Q}, <, \mathbb{Z})$, where $\mathbb{Z}$ is interpreted as a unary predicate. In this case, $M$ is clearly an almost o-minimal structure. Now, let $N$ be the elementary extension obtained by taking two copies of $M$ ordered linearly; more precisely, let the domain of $N$ be $\{0, 1\} \times \mathbb{Q}$ equipped with the lexicographic order, and let $\mathbb{Z}^N = \{0, 1\} \times \mathbb{Z}$. While $M \equiv N$, the structure $N$ is no longer almost o-minimal. This is because the bounded interval between $(0, 0)$ and $(1, 0)$ contains infinitely many $\mathbb{Z}$-elements, violating the requirements of almost o-minimality.
\end{example}

\begin{example}
Let \(M\) be the structure \((\mathbb{Q}\setminus\{0\},<,U)\), where  
\(U = \{\pm 1/n : n \in \omega \setminus \{0\}\}\) is given by a unary predicate.  
Then \(M\) is a model of DCTC, but it is not almost o-minimal.
\noindent\emph{Proof.}
The theory \(Th(M)\) has quantifier elimination after expanding the language to  
\(\{<, U,-1,1, D_n(x,y) \ (n \in \omega)\}\), where \(D_n(x,y)\) expresses that there are at least \(n\) many \(U\)-points in the interval \((x,y)\).  
This can be shown by a standard back-and-forth argument.  
Quantifier elimination in this expanded language implies that \(M\) satisfies DCTC, while the discrete nature of \(U\) shows that \(M\) is not almost o-minimal.
\end{example}

\begin{remark}
By definition, it is immediate that an almost o-minimal structure $M$ is both locally o-minimal and definably complete. 
In particular, if $N \equiv M$ then $N$ is locally o-minimal and definably complete. 
\end{remark}

Two main ingredients in the proof of our main results are the notion of bounded ultrapowers from set theory and Gaifman's theorem for Peano arithmetic.  
Neither of these tools is widely familiar among researchers in o-minimality, so we briefly recall the basic facts surrounding them.
In model theory, the ultrapower is a powerful construction that produces a new structure from a given structure \(M\).  
It is obtained by taking an infinite direct product of \(M\) and identifying elements that are equivalent modulo a non-principal ultrafilter.  
However, this construction does not preserve almost o-minimality in the sense of Fujita.  
This phenomenon is analogous to the fact that a non-principal ultrapower of the structure $(V_\omega, \in)$ of hereditary finite sets is not well-founded (see \cite{Davis1977}). 
To address this issue within the context of o-minimal-like structures, we introduce the notion of a \emph{bounded ultrapower}, which parallels similar constructions in set theory and non-standard analysis.

In general, a bounded ultrapower does not preserve the truth of all formulas, but only of those with bounded quantifiers.  
Our goal, however, is to construct a proper elementary extension of an almost o-minimal structure.  
To overcome this obstacle, we adapt Gaifman's argument for models of Peano arithmetic to our setting.
Broadly speaking, Gaifman’s Splitting Theorem states that any elementary extension
\(N\) of a model \(M\) of Peano Arithmetic (PA) can be decomposed into an elementary
cofinal extension \(M' \succ M\), followed by an elementary end extension \(N \succ M'\) (see \cite{Gaifman} and \cite{{Kaye1991}}).
Since our bounded ultrapower is a cofinal extension of the original model, this suggests
that Gaifman’s argument may be adapted to our setting—and indeed, it can.


\section{Bounded ultrapowers}
Let $L$ be a language containing the less-than symbol $<$, and let $M = (M, <, \dots)$ be an $L$-structure, where $<$ is a dense linear order without endpoints. 
Let $U$ be a non-principal ultrafilter on the index set $I$. We denote
by $M^{(I)}$ the set of all bounded functions from $I$ to $M$. 
Here, $f$ is considered bounded if there exist elements $a, b \in M$ such that $a < f(i) < b$ holds for all $i \in I$. 
On the set $M^{(I)}$,  we define an equivalence relation $\sim$ by:
\[
f \sim g \iff \{i \in I : f(i) = g(i)\} \in U.
\]
Let $[f]$ denote the equivalence class of $f$, and let $M^{(I)}/U$ denote the set of all equivalence classes. 
We consider $M^{(I)}/U$ as an $L$-structure by defining the interpretation of symbols in $L$:
\[
M^{(I)} \models R([f], . . .) \iff \{i \in I : M \models R(f(i), . . .)\} \in U,
\]
where $R$ is a predicate symbol. Function symbols are also interpreted in
natural way. $M^{(I)}/U$ will be called the bounded ultrapower of $M$ modulo $U$.
For simplicity, we will denote $M^{(I)}/U$ by $M^*$.

\begin{definition}
A quantifier of the form \(\forall x \in D\) or \(\exists x \in D\) is called a \emph{bounded quantifier}, where \(D\) is a bounded interval in \(M\).  
Let \(\varphi\) be an \(L\)-formula.  
If every quantifier occurring in \(\varphi\) is bounded, then \(\varphi\) is called a \(\Delta_0\)-formula (or a \emph{bounded formula}).  
The classes \(\Pi_n\)- and \(\Sigma_n\)-formulas are defined in the usual way.  
In particular, a formula of the form
\(
    \forall x\, \exists y\, \forall z\, \theta(x,y,z)
\)
is a \(\Pi_3\)-formula, provided that \(\theta\) is bounded.
\end{definition}

The following lemma, which is an analogue of \L\'os's lemma, can be established easily by induction on the number of bounded quantifiers.
We provide only a sketch of the proof here.  
For further details, the reader may consult, for example, Davis’s book \cite{Davis1977}.
Recall that $M^*$ denotes $M^{(I)}/U$.
\begin{lemma}\label{Los}
For all $\Delta_0$-formulas $\varphi (\bar x)$, $\bar x = x_1, \dots, x_m$ and $\bar f = f_1, . . . , f_m \in M^{(I)}$, 
\[
M^*
\models \varphi ([ \bar f]) \iff  \{i \in  I : M \models \varphi (f_1(i), . . . , f_m(i))\} \in  U,
\]
where $[\bar f] = [f_1], . . . , [f_m]$ and $\bar f(i) = f_1(i), \dots , f_m(i)$.
\end{lemma}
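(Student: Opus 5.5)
Throughout, I read a $\Delta_0$-formula as one in which every quantifier is relativized to an interval whose endpoints are given by parameters (constants from $M$, identified with the constant functions, or variables among $\bar x$). Then $\exists y \in (s,t)\,\psi$ abbreviates $\exists y\,(s<y<t \wedge \psi)$, and similarly for $\forall$. I would prove the lemma by induction on the construction of $\varphi$, following Łoś's proof for full ultrapowers. The new point is that a witness must be chosen as a \emph{bounded} function, so that it lies in $M^{(I)}$.

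Atomic case. First I check that terms behave correctly. For any term $t(\bar x)$ and $\bar f \in M^{(I)}$, I need $t^{M^*}([\bar f]) = [\,i\mapsto t^M(\bar f(i))\,]$. This presupposes that $i \mapsto t^M(\bar f(i))$ is bounded, which is implicit in the claim that function symbols are "interpreted in the natural way." I take this well-definedness as given by the construction of $M^*$. The atomic case is then exactly the definition of the interpretation of relation symbols and of equality on $M^*$, because $\sim$ is defined via $U$.

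Boolean connectives. Negation works because $U$ is an ultrafilter: the complement of a set not in $U$ is in $U$. Conjunction works because $U$ is closed under intersections and supersets. It suffices to treat $\neg$, $\wedge$ and bounded $\exists$, since bounded $\forall$ is $\neg\exists\neg$ with the same bound.

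Bounded existential step. Let $\varphi(\bar x) = \exists y\,(s(\bar x) < y < t(\bar x) \wedge \psi(\bar x,y))$ with $\psi$ satisfying the induction hypothesis.

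For the direction from $M^*$ to $U$: suppose $M^* \models \varphi([\bar f])$, witnessed by some $[g]$. Then $s([\bar f])<[g]<t([\bar f])$ and $\psi([\bar f],[g])$ all hold in $M^*$. By the atomic case and the induction hypothesis, each of these holds on a $U$-large set of indices. Their intersection is in $U$ and is contained in $\{i : M\models\varphi(\bar f(i))\}$.

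For the converse: suppose $X=\{i : M\models\varphi(\bar f(i))\}\in U$. For $i\in X$, choose $g(i)$ with $s(\bar f(i))<g(i)<t(\bar f(i))$ and $M\models\psi(\bar f(i),g(i))$; this uses the axiom of choice. For $i\notin X$, let $g(i)$ be any element strictly between fixed bounds $a<b$ of the function $i \mapsto s(\bar f(i))$.

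This is the only step that differs from the classical proof, and I regard it as the main, though mild, obstacle. We must verify $g\in M^{(I)}$. On $X$, $g$ is trapped between $s(\bar f(i))$ and $t(\bar f(i))$, and both are bounded as functions of $i$ by the term case. Off $X$, $g$ is bounded by construction. Hence $g$ is bounded.

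Finally, I apply the atomic case and the induction hypothesis to $[g]$. The relations $s([\bar f])<[g]<t([\bar f])$ and $\psi([\bar f],[g])$ each hold on a set containing $X$, hence in $U$, so they hold in $M^*$. Therefore $M^*\models\varphi([\bar f])$.

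Boundedness of the quantifier is essential at this point. An unbounded existential would give a witness function that might be unbounded, and so might not represent an element of $M^*$. This is why the lemma is restricted to $\Delta_0$-formulas.
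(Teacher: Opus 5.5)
Your proof is correct and follows the paper's approach. It proceeds by induction on formulas, and the only nontrivial step is the bounded existential. There the witness is chosen pointwise, and it lies in $M^{(I)}$ because it is trapped between the bounded endpoint functions.

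You are also a bit more careful than the paper in two places. You define the witness off the $U$-large set, and you flag that function symbols must preserve boundedness for $M^*$ to be well defined.
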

\begin{proof}
We prove the lemma by induction on the number of bounded quantifiers occurring in \(\varphi\).  
The base case is immediate, as it follows directly from the definition of \(M^*\).
 The crucial induction step is when $\varphi$  has the form 
 \[
 \exists y \in  (x_1, x_2) \big(\theta (y, \bar x)\big),
 \]
 where $y \in  (x_1, x_2)$ means that $y$ belongs to the open interval $(x_1,x_2)$. 
 Suppose $M^*\models \exists y \in  ([f_1], [f_2])\big(\theta (y, [
\bar f])\big)$ holds.
Then, there is $h \in  M^{(I)}$ such that $M^*
\models [h] \in  ([f_1], [f_2]) \wedge \theta ([h], [
\bar f])$ holds. 
So, by the induction hypothesis, we have
\[
\{i \in  I : M \models h(i) \in  (f_1(i), f_2(i)) \wedge \theta (h(i),\bar f(i))\} \in  U.
\]
Hence,
$\{i \in  I : M \models \exists y \in  (f_1(i), f_2(i))\big(\theta (y, \bar f(i)\big)\}$ also belongs to $U$.
For the other implication, suppose $\{i \in  I : M \models \exists y \in 
(f_1(i), f_2(i))\big(\theta (y, \bar f(i)\big)\} \in  U$. 
Then, for each $i$, choose $h(i) \in M$ such
that $\{i : h(i) \in  (f_1(i), f_2(i)) \wedge \theta (y, \bar f(i))\} \in  U$. 
Since $f_1$ and $f_2$ are bounded, $h:I \to M$ is a bounded function. 
By the induction
hypothesis, $M^* \models [h] \in  ([f_1], [f_2]) \wedge \theta ([h], [\bar f])$ holds. 
So, we have $M^* \models \exists y \in  ([f_1], [f_2])\big(\theta (y, [\bar f])\big)$ as desired.
\end{proof}
For $a \in  M$, a can be considered the function in $M^{(I)}$ that constantly yields the value $a$. By identifying $a$ and $[a]$, it is easy to see that $M$ is coinitial and cofinal in $M^*$. 
By the above equivalence in Lemma \ref{Los}, we also have $M \prec_{\Delta_0} M^*$, namely $M^*$ is a $\Delta_0$-elementary extension of $M$. 
We now consider the following condition, which corresponds to a weaker form of the replacement schemes.  
A formula with parameters from \(M\) is called an \(L(M)\)-formula.  
For a tuple \(\bar y = (y_0,\dots,y_{n-1})\), the quantification
\(
    \forall \bar y \in (a,b)
\)
is shorthand for
\[
    \forall y_0 \in (a,b)\;\dots\;\forall y_{n-1} \in (a,b).
\]
\begin{itemize}
    \item[($\dagger$)]  For all $L(M)$-formulas $\varphi (x, \bar y)$ and $a, b \in  M$, if $M \models \forall \bar y \in 
(a, b)\exists x(\varphi (x, \bar y))$, then there exist $d, e \in  M$ such that $M \models \forall \bar y \in 
(a, b)\exists x \in  (d, e)(\varphi (x, \bar y))$.
\end{itemize}

The following theorem is an analogue of Gaifman's Splitting Theorem for Peano arithmetic (see, e.g., \cite{Kaye1991}).  
Our version is formulated in the continuous setting.
\begin{theorem}\label{thm-elementary}
    Assume that $M$ satisfies the condition $(\dagger)$. Then, $M \prec M^*$, and $M$ is cofinal and coinitial in $M^*$. 
\end{theorem}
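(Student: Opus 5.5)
Cofinality and coinitiality are immediate: every $f\in\MI$ satisfies $a<f(i)<b$ for all $i$ with $a,b\in M$, so $a<[f]<b$ in $M^*$. The substantive claim is $M\prec M^*$. I would prove, by induction on the formula, a strengthened \L o\'s-type statement for all $L$-formulas $\varphi(\bar x)$:
\[
M^*\models\varphi([\bar f]) \iff \{i\in I: M\models\varphi(\bar f(i))\}\in U \qquad (\bar f\in\MI).
\]
Taking constant functions then gives $M\prec M^*$. Atomic formulas and the boolean connectives are handled exactly as in Lemma~\ref{Los}, via the ultrafilter properties. So the only real case is an unbounded existential $\varphi(\bar x)=\exists y\,\theta(y,\bar x)$.

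For the direction from $M^*$ to $M$, suppose $M^*\models\theta([h],[\bar f])$ for some $h\in\MI$. The induction hypothesis gives a $U$-large set of $i$ with $M\models\theta(h(i),\bar f(i))$, so $M\models\exists y\,\theta(y,\bar f(i))$ on a $U$-large set. No boundedness is needed here.

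The converse direction is the main obstacle, and it is where $(\dagger)$ enters. Suppose $X=\{i: M\models\exists y\,\theta(y,\bar f(i))\}\in U$.
\begin{enumerate}
\item Since the $f_j$ are bounded, choose $a<b$ in $M$ with $a<f_j(i)<b$ for all $i$ and $j$.
\item Consider the $L(M)$-formula $\psi(y,\bar x)=\theta(y,\bar x)\vee\neg\exists z\,\theta(z,\bar x)$. Then $M\models\forall\bar x\in(a,b)\,\exists y\,\psi(y,\bar x)$.
\item By $(\dagger)$ there are $d,e\in M$ with $M\models\forall\bar x\in(a,b)\,\exists y\in(d,e)\,\psi(y,\bar x)$.
\item For $i\in X$, the second disjunct of $\psi$ fails at $\bar f(i)$, so we may pick $h(i)\in(d,e)$ with $M\models\theta(h(i),\bar f(i))$. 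For $i\notin X$, set $h(i)=d$.
\item Then $h$ is bounded, so $h\in\MI$. The induction hypothesis applied to $\theta$ gives $M^*\models\theta([h],[\bar f])$, hence $M^*\models\exists y\,\theta(y,[\bar f])$.
\end{enumerate}

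The point needing care is that $(\dagger)$ is applied to $\theta$, which may itself contain unbounded quantifiers. This is allowed because $(\dagger)$ is stated for arbitrary $L(M)$-formulas. Likewise the induction hypothesis is applied to arbitrary subformulas, not only to $\Delta_0$ ones, so the induction runs over all formulas and Lemma~\ref{Los} becomes the special case in which every quantifier is bounded. Universal quantifiers reduce to existentials via negation. This completes the induction and hence the proof of Theorem~\ref{thm-elementary}.
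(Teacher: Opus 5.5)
Your proposal is correct and follows essentially the same route as the paper: a \L o\'s-type induction over all formulas in which the only nontrivial case, an unbounded $\exists y\,\theta$, is handled by applying $(\dagger)$ to $\exists z\,\theta(z,\bar x)\to\theta(y,\bar x)$ and choosing a bounded witness function. Your $\psi$ is that same formula written as a disjunction. Cofinality and coinitiality are the immediate observation the paper also makes just before the theorem.
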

    \begin{proof}
    We prove the following claim by induction on the complexity of formulas:
    \begin{claim}
        For all $L(M)$-formulas $\varphi(\bar y)$, and for all $\bar f=f_1,\dots,f_m \in \MI$,
    \[
    M^* \models \varphi([\bar f]) \iff \{i \in I: M\models \varphi(\bar f(i))\} \in U.
    \]
    \end{claim}
    The crucial step is again the case when $\varphi(\bar y)$ has the form $\exists x(\theta(x, \bar y))$. 
    By the definition of $M^*$, we have the equivalence: 
\[
         M^* \models \exists x (\theta(x,[\bar f])) \iff M^* \models  \theta([g],[\bar f]) \text{ for some $g \in \MI$}. 
\]
By the induction hypothesis, for all $h \in \MI$,
\[\tag{*}
 M^* \models  \theta([h],[\bar f]) \iff 
 \{i \in I: M \models \theta(h(i), \bar f(i))\} \in U 
\]
Since $ M \models \theta(h(i), \bar f(i))$ implies $ M \models \exists x (\theta(x,\bar f(i)))$, we have 
\[
         M^* \models \exists x (\theta(x,[\bar f])) \Longrightarrow  \{i \in I: M \models \exists x(\theta(x, \bar f(i)))\} \in U. 
\]
Now we show the converse of the above. So, suppose that $A:=\{i \in I: M \models \exists x(\theta(x, \bar f(i)))\} \in U$. 
As $\bar f$ are bounded functions, there exist $a,b \in M$ such that $a<\bar f(i) <b$ holds for all $i \in I$.
Observe that the sentence $\forall \bar y \in (a,b) \exists x (\exists z (\theta(z,\bar y)) \to \theta(x,\bar y))$ is true in $M$. 
Hence, by the condition $(\dagger)$, we have $d,e \in M$ such that 
\[
M \models \forall \bar y \in (a,b) \,\exists x \in (d,e)\big(\exists z (\theta(z,\bar y)) \,\to\, \theta(x,\bar y)\big).
\]
For $i \in A$, since $\exists z \theta(z,\bar f(i))$ is true, we have 
\[
M \models \exists x \in (d,e) \big(\theta(x, \bar f(i))\big). 
\]
Thus, we can choose a function $h \in \MI$ such that $M \models \theta(h(i), \bar f(i))$ holds for all $i \in A$. 
From this and the induction hypothesis (*), we have $M^*\models \theta([h],[\bar f])$, and hence $M^* \models \exists x (\theta(x,[\bar f]))$. 
Thus, the converse has also been proven. 
\end{proof}

\section{Some generalizations}
As in the previous section, $M=(M,<,\dots)$ and $U$ is a non-principal ultrafilter on $I$.  
We introduce the following two sets:
\begin{itemize}
    \item $M^{(I)^+}$ denotes the set of all functions $f:I \to M$ such that $f$ is upper bounded; 
    \item $M^{(I)^-}$ denotes the set of all functions $f:I \to M$ such that $f$ is lower bounded. 
\end{itemize}
Similar to the case of bounded ultrapowers, we can also introduce new structures $M^{(I)^+}\!/U=\{[f]: f \in M^{(I)^+}\}$ and $M^{(I)^-}\!/U=\{[f]: f \in M^{(I)^-}\}$. 
These will be called an upper-bounded ultrapower and a lower-bounded ultrapower, respectively. 
Finally, we define two conditions: 
\begin{itemize}
    \item[$(\dagger)^+$] 
     For all $L(M)$-formulas $\varphi(x, \bar y)$ and $a \in M$, if $M \models \forall \bar y <a \exists x (\varphi(x,y))$, then there exists $d \in M$ such that $M \models \forall \bar y < a \exists x <d (\varphi(x,y))$.  
   \item [$(\dagger)^-$] For all $L(M)$-formulas $\varphi(x, \bar y)$ and $a \in M$, if $M \models \forall \bar y > a \exists x (\varphi(x,y))$, then there exists $d \in M$ such that $M \models \forall \bar y > a \exists x > d (\varphi(x,y))$.  
\end{itemize}

\begin{proposition}
\begin{enumerate}\label{prop-bounded}
    \item     If $M$ satisfies condition $(\dagger)^+$, then $M \prec M^{(I)^+}\!\!/U$. 
    \item    If $M$ satisfies condition $(\dagger)^-$, then $M \prec M^{(I)^-}\!\!/U$.   
\end{enumerate}
\end{proposition}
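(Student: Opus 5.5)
We follow the proof of Theorem \ref{thm-elementary} verbatim, replacing two-sided boundedness by upper-boundedness for item 1; item 2 then follows by symmetry (or by applying item 1 to the structure with the reversed order, under which $(\dagger)^-$ becomes $(\dagger)^+$ and lower-bounded functions become upper-bounded ones). So fix $(\dagger)^+$ and write $M^+ := M^{(I)^+}\!/U$. First we note that $M^+$ is a well-defined $L$-structure: the interpretation of a function symbol $F$ must send upper-bounded functions to upper-bounded functions. This is itself an instance of $(\dagger)^+$. Given $a$ bounding $\bar f$ from above, $M\models \forall \bar y<a\,\exists x\,(x=F(\bar y))$, so there is $d$ with $F(\bar y)<d$ for all $\bar y<a$. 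Constant functions are upper bounded, so $M$ embeds into $M^+$ via $a\mapsto [a]$.

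The main step is the analogue of the Claim in Theorem \ref{thm-elementary}: for every $L(M)$-formula $\varphi(\bar y)$ and all $\bar f\in M^{(I)^+}$,
\[
M^+\models\varphi([\bar f]) \iff \{i\in I: M\models \varphi(\bar f(i))\}\in U.
\]
We prove this by induction on $\varphi$. Atomic formulas hold by definition, and the Boolean connectives are routine using that $U$ is an ultrafilter.

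For $\exists x\,\theta(x,\bar y)$, the left-to-right direction is immediate from the induction hypothesis. For the converse, suppose $A=\{i: M\models\exists x\,\theta(x,\bar f(i))\}\in U$, and choose $a\in M$ with $\bar f(i)<a$ for all $i$, which is possible since each $f_j$ is upper bounded. The sentence
\[
\forall\bar y<a\,\exists x\,\bigl(\exists z\,\theta(z,\bar y)\to\theta(x,\bar y)\bigr)
\]
is true in $M$, so $(\dagger)^+$ gives $d\in M$ such that
\[
M\models\forall\bar y<a\,\exists x<d\,\bigl(\exists z\,\theta(z,\bar y)\to\theta(x,\bar y)\bigr).
\]
Now define $h(i)<d$ witnessing $\theta(x,\bar f(i))$ for $i\in A$, and $h(i)$ any element below $d$ otherwise. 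Then $h$ is upper bounded, so $h\in M^{(I)^+}$. The induction hypothesis gives $M^+\models\theta([h],[\bar f])$.

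Applying the Claim to constant functions yields $M\prec M^+$. The only real subtlety is ensuring that witnesses remain inside the restricted function space. This is exactly what $(\dagger)^+$ provides, and only an upper bound is needed, since membership in $M^{(I)^+}$ demands nothing more.
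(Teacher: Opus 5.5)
Your proof is correct and follows essentially the same route as the paper, which adapts the inductive Claim in the proof of Theorem~\ref{thm-elementary} by using $(\dagger)^+$ to keep the existential witness upper bounded, and treats item 2 symmetrically. Your extra check, that $(\dagger)^+$ makes $M^{(I)^+}\!/U$ closed under the interpretations of function symbols, is a detail the paper leaves implicit, and you handle it correctly.
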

\begin{proof}
    We can adapt the proof of Theorem~\ref{thm-elementary} with only minor modifications.  
For instance, in the proof of item~1, we work with quantifiers of the form  
\(\forall x \in (-\infty,a)\) and \(\exists x \in (-\infty,a)\) in place of bounded quantifiers.  
With this change, the argument proceeds in parallel to that of Lemma \ref{Los} and Theorem \ref{thm-elementary}.
\end{proof}

\section{Almost o-minimal structures}
    If $M$ is almost o-minimal, then the theory $Th(M)$ is locally o-minimal with definable completeness.

\begin{remark}\label{rem-interval-structure}
    Let $M$ be almost o-minimal. For elements $a<b$ in $M$, let $L^*=L \cup \{P_D\}_D$, where each $P_D$ is a predicate symbol, and $D$ runs over the $M$-definable subsets of $(a,b)^m$ with $m \in \omega$.  The interval $(a,b)$ is considered as an $L^*$-structure, by the natural interpretation, i.e., $P_D{}^{(a,b)}=D$. 
    Then, the $L^*$-structure $(a,b)$ is o-minimal, and all definable sets in $(a,b)$ are definable in $M$. 
\end{remark}

The following are auxiliary definitions that will be used in proving the main results of this section.
\begin{definition}
A structure $M$ is referred to as `downward o-minimal' (`upward o-minimal') if all its definable subsets that are bounded above (or below, respectively) can be expressed as a finite union of points and intervals.
\end{definition}
It is clear that if \( M \) is downward (or upward) o-minimal, then \( M \) is almost o-minimal; in particular, all models $N \equiv M$ are locally o-minimal and definably complete. 
By an argument similar to that in Remark~\ref{rem-interval-structure}, if \( M \) is downward o-minimal,  
then the structure \( (-\infty,a) \), equipped with the augmented predicates for definable sets, is o-minimal.  
The same conclusion holds in the case of upward o-minimality.

\begin{remark}\label{remark-3-cases}
   Suppose that a structure $M$ is almost o-minimal, but not o-minimal. 
   Then, one of the following must hold.
   \begin{enumerate}
       \item $M$ is downward o-minimal but not upward o-minimal;
       \item $M$ is upward o-minimal but not downward o-minimal;
       \item $M$ is neither downward nor upward o-minimal. 
   \end{enumerate}
   \begin{proof}
       Suppose 3 is not the case. So, $M$ is downward o-minimal, or upward o-minimal. By symmetry, we can assume $M$ is downward o-minimal.
       If $M$ were also upward o-minimal, then $M$ would satisfy the definition of o-minimality, contradicting the initial assumption on $M$. 
       Thus, item 1 holds in this case.  
   \end{proof}
\end{remark}

\begin{theorem}\label{thm-almost-o-mini}
    Let $M$ be an almost o-minimal structure that is not o-minimal. 
    \begin{enumerate}
        \item Suppose that $M$ is downward o-minimal. 
        Let $N$ be a cofinal elementary extension of $M$. 
        Then, $N$ is downward o-minimal. 
        \item Suppose that $M$ is upward o-minimal. 
        Let $N$ be a coinitial elementary extension of $M$. 
        Then, $N$ is upward o-minimal. 
        \item Suppose that $M$ is neither downward nor upward o-minimal. 
        Let $N$ be a cofinal and coinitial elementary extension of $M$. Then, $N$ is almost o-minimal (and neither downward nor upward o-minimal). 
    \end{enumerate}
\end{theorem}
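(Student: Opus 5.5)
The key is to reduce everything to the o-minimality of a fixed interval of $M$, as in Remark~\ref{rem-interval-structure}, and to transfer that o-minimality to $N$ via uniform finiteness (Fact~\ref{unif-finite}).

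\textbf{Item 1.} Let $X=\varphi(N,\bar c)$ be an $N$-definable set bounded above, say $X\subset(-\infty,c_0)$. Since $N$ is a cofinal extension of $M$, there is $a\in M$ with $X\subset(-\infty,a)$ and $\bar c\in(-\infty,a)^m$. Here we use cofinality to push every parameter below a common bound $a$ in $M$. By downward o-minimality, the structure $(-\infty,a)^M$, equipped with the predicates $P_D$ for $M$-definable subsets of $(-\infty,a)^k$, is o-minimal. Consider the $M$-definable family
\[
D=\{(\bar y,x)\in(-\infty,a)^{m+1}: \varphi(x,\bar y)\}.
\]
In the o-minimal structure $(-\infty,a)^M$, each fiber $D_{\bar y}$ is a finite union of points and intervals. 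The boundary set $\partial D_{\bar y}$ is definable uniformly in $\bar y$ and is finite. By Fact~\ref{unif-finite} there is $k$ such that every fiber has fewer than $k$ boundary points. Hence $M$ satisfies the first-order sentence
\[
\forall \bar y<a\ \big(\varphi(M,\bar y)\cap(-\infty,a) \text{ is a union of at most } k \text{ points and intervals}\big).
\]
This is expressible by quantifying over at most $2k$ endpoints, allowing $-\infty$ as an endpoint. By elementarity the same sentence holds in $N$, and applying it to $\bar c$ gives the conclusion for $X$.

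The main subtlety is that the endpoints produced in $N$ lie in $N\cup\{-\infty\}$. That is fine, because the definition only asks that $X$ be a finite union of points and intervals in $N$. One also has to make sure the statement ``the boundary is finite, with fewer than $k$ points'' is first-order, which it is once $k$ is fixed. Uniform finiteness is applied to the o-minimal structure $(-\infty,a)$, whose definable sets are $M$-definable by the remark, so the sentence can be written in $L(M)$.

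\textbf{Item 2} is symmetric, using coinitiality and intervals $(a,\infty)$.

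\textbf{Item 3.} Given a bounded $N$-definable set $X=\varphi(N,\bar c)$, cofinality together with coinitiality gives $a<b$ in $M$ such that both $X$ and $\bar c$ lie in $(a,b)$. Now run the same argument with the o-minimal structure $(a,b)$ from Remark~\ref{rem-interval-structure}. This shows $N$ is almost o-minimal.

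For the parenthetical claim, we must show $N$ is not downward or upward o-minimal. Suppose $M$ is not downward o-minimal. By local o-minimality and definable completeness there is some $M$-definable set $X\subset(-\infty,a)$ that is not a finite union of points and intervals. By definable completeness, its set of boundary points is then a definable discrete set that is unbounded below. The first-order statement ``the boundary of $X$ is unbounded below'' transfers to $N$, and an unbounded discrete boundary is infinite, so $N$ is not downward o-minimal either. The upward case is symmetric.
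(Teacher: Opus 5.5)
Your proof is correct and uses the same mechanism as the paper's. You use cofinality (and coinitiality) to place the set and its parameters inside an interval of $M$, apply o-minimality of that interval with the induced predicates (Remark~\ref{rem-interval-structure}) together with uniform finiteness, and transfer by elementarity. The only difference is direction: you push the uniform bound on the number of pieces up from $M$ to $N$, whereas the paper argues by contradiction, first replacing a counterexample in $N$ by its infinite discrete frontier (using local o-minimality of $N$) and then pulling arbitrarily large finite discrete fibers down to $M$.

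Your direct version does not need local o-minimality of $N$. You also give an argument for the parenthetical claim in item 3, which the paper leaves unproved. That argument is correct, if compressed: unboundedness of $\partial X$ below also uses that a bounded discrete definable set in $M$ is finite by almost o-minimality, not only definable completeness.
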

     \begin{proof}
     Since the other cases are proved similarly, we prove 1. 
     For a contradiction, we assume that $N$ is not downward o-minimal, and let $D \subset N$ be its witness. 
     Since $N$ is locally o-minimal, by replacing $D$ with its frontier $\partial D$, we may assume that $D$ is an infinite definable discrete set that is bounded above.
     (This kind of reduction will also be used in the proof of Theorem~\ref{main}.)
     Since $M$ is cofinal in $N$, we can choose $a \in M$ such that $D <a$. 
     So, there is a non-algebraic formula $\varphi(x,\bar d)$ with $\bar d \in N$  such that (i) $N \models \forall x (\varphi(x,\bar d) \to x<a)$ and (ii) $\varphi(x, \bar d)$ defines a discrete set in $N$. 
Here, we can assume that $\bar{d} < a$ holds in $N$, by replacing $a$ with a larger element of $M$ if necessary.
Using \( \varphi \), we can easily obtain another formula 
\(\psi(x,a,\bar y)\), which defines a discrete set for all \(\bar y\), 
together with tuples \(\bar d_n \in N\) satisfying \(\bar d_n < a\) 
for \( n \in \omega \), such that
\begin{itemize}
  \item[(*)] \(\psi(N,a,\bar d_n)\) is a discrete subset of the interval 
  \((-\infty,a)^N\) of cardinality at least \(n\), for all \(n \in \omega\).
\end{itemize}
         From $M \prec N$ , using $(*)$, we get $\bar e_n \in M$ with $\bar e_n<a$ $(n \in \omega)$ such that each $\psi(M,a,\bar e_n)$ is a discrete (hence, finite) subset of $(-\infty,a)^M$ of cardinality $\geq n$.
This contradicts the o-minimality of $(-\infty,a)^M$ (see Remark \ref{rem-interval-structure}). (Here we used the fact that o-minimal structure has uniform finiteness property. See Fact \ref{unif-finite}.) 
 \end{proof}

\begin{theorem}\label{main}
    Let $M$ be almost o-minimal. 
    Then, there is a proper elementary extension $M^* \succ M$ that is still almost o-minimal. 
\end{theorem}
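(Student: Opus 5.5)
The plan is to split into the cases of Remark~\ref{remark-3-cases}, together with the trivial case where $M$ is o-minimal. In each case I will produce a proper elementary extension with the right cofinality and coinitiality, and then apply Theorem~\ref{thm-almost-o-mini}. If $M$ is o-minimal, any proper elementary extension, which exists by compactness, is o-minimal since o-minimality is elementary, and hence almost o-minimal.

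\textbf{Case 3} ($M$ is neither downward nor upward o-minimal). Take $I=\omega$ with a non-principal ultrafilter $U$, and let $M^*=M^{(I)}/U$.
\begin{enumerate}
\item Once $(\dagger)$ is verified for $M$, Theorem~\ref{thm-elementary} gives $M\prec M^*$ with $M$ cofinal and coinitial in $M^*$.
\item The extension is proper. Fix $a<b$ and an injective $f:\omega\to(a,b)$, which exists since the order is dense. Then $f$ is bounded. For each $c\in M$ the set $\{i: f(i)=c\}$ has at most one element, so it is not in $U$. Hence $[f]\notin M$.
\item Theorem~\ref{thm-almost-o-mini}(3) then shows that $M^*$ is almost o-minimal.
\end{enumerate}

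\textbf{Cases 1 and 2.} In Case 1 ($M$ downward o-minimal), use the upper-bounded ultrapower $M^{(I)^+}\!/U$. Proposition~\ref{prop-bounded}(1) gives elementarity, provided $(\dagger)^+$ holds. $M$ is cofinal in it because every upper-bounded $f$ lies below a constant. Properness follows as above, and Theorem~\ref{thm-almost-o-mini}(1) finishes. Case 2 is symmetric, using $M^{(I)^-}\!/U$, $(\dagger)^-$ and Theorem~\ref{thm-almost-o-mini}(2).

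\textbf{Main obstacle: the replacement conditions.} The substantive work is to check $(\dagger)$ (respectively $(\dagger)^{\pm}$) from almost o-minimality (respectively downward or upward o-minimality). I expect this to be the hard step, and it may need adjusting.

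Already for an o-minimal field, the formula $xy=1$ with $y\in(0,1)$ has no witnesses in a fixed bounded interval. So $(\dagger)$ with open boxes seems too strong as literally stated. I would try to replace it by the version with closed boxes $[a,b]^n$. This suffices for the proof of Theorem~\ref{thm-elementary}, because a bounded $\bar f$ satisfies $a<\bar f(i)<b$ and so lies in a closed box. The approach for the closed-box version is as follows.
\begin{itemize}
\item Put $S_{\bar y}=\varphi(M,\bar y)$ and fix $c\in M$. For each $\bar y$ there is a canonical point "nearest to $c$": either $\inf(S_{\bar y}\cap[c,\infty))$ or $\sup(S_{\bar y}\cap(-\infty,c])$. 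Both exist by definable completeness.
\item Local o-minimality lets me move from this infimum or supremum to an actual point of $S_{\bar y}$ in a small neighbourhood of it.
\item This yields definable functions $u$ on $[a,b]^n$. It then remains to show that their images are bounded.
\item For $n=1$, apply Fact~\ref{pairwise-conti}. The discontinuity set meets $[a,b]$ in a bounded discrete set, which is finite by almost o-minimality. Between consecutive points, and at the closed ends, definable completeness gives a closed-bounded-image property: a continuous definable function has bounded image on a closed bounded interval. Hence $u$ is bounded.
\item For general $n$, I would proceed by induction on $n$. Fixing one coordinate at a time, take a supremum over the remaining coordinates and apply the $n=1$ case in each coordinate.
\end{itemize}
For $(\dagger)^+$, the boxes become $(-\infty,a]$, and downward o-minimality makes $(-\infty,a)$ o-minimal. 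The same argument should go through on that side.
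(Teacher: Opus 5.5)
Your overall architecture matches the paper's. You use the case split of Remark~\ref{remark-3-cases}, the bounded or upper-bounded ultrapower, Theorem~\ref{thm-elementary} or Proposition~\ref{prop-bounded} for elementarity, and Theorem~\ref{thm-almost-o-mini} to transfer almost o-minimality. Your properness argument with an injective bounded $f:\omega\to(a,b)$ is correct, and simpler than the paper's element infinitely close to $a$. The gap is in verifying $(\dagger)$ and $(\dagger)^+$, and it already appears in your base case $n=1$. Fact~\ref{pairwise-conti} gives continuity only on the open intervals between the finitely many break points. The bounded-image property needs continuity on a closed interval, and a continuous monotone definable function may tend to $\pm\infty$ at a break point. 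Your argument never uses that $M$ is not o-minimal, so it would prove the closed-box condition in every o-minimal expansion of an ordered field, where it is false. Take $\varphi(x,y):=(y=0\wedge x=0)\vee(y\neq 0\wedge xy=1)$: each $y\in[0,1]$ has exactly one witness, $0$ or $1/y$, and these are unbounded. In fact the open-box and closed-box schemes are equivalent. To get the open version from the closed one, apply the latter on $[a,b]$ to $\bar y\in(a,b)^n\to\varphi(x,\bar y)$; conversely, apply the open version on a larger box to $\bar y\in[a,b]^n\to\varphi(x,\bar y)$. So switching to closed boxes changes nothing. Your observation that $(\dagger)$ fails for o-minimal fields is correct but harmless. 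The paper treats o-minimal $M$ separately, and uses $(\dagger)$ only in Case~3 and $(\dagger)^+$ only in Case~1, where they hold as stated. (An almost o-minimal expansion of an ordered field is automatically o-minimal, via $x\mapsto 1/x$.)

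The missing idea is to exploit non-o-minimality through a definable discrete set $D$ that is unbounded above (Case~1), resp.\ in both directions (Case~3). It exists because the boundary of a definable set witnessing the failure of upward (downward) o-minimality is discrete by local o-minimality, infinite, and has finite bounded pieces. One first shows by induction on $n$ that every definable $F:(a,b)^n\to D$, resp.\ $F:((-\infty,a)^M)^n\to D$, has bounded, resp.\ upper-bounded, range. For $n=1$, the discontinuity set of $F$ is finite by almost (resp.\ downward) o-minimality, and $F$ is constant on each interval of continuity because $D$ is discrete, so $\ran(F)$ is finite. For the inductive step, apply the hypothesis to $\bar c\mapsto\min\ran(F_{\bar c})$ and $\bar c\mapsto\max\ran(F_{\bar c})$. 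One then rounds witnesses into $D$. Put $F(\bar y)=\min\{w\in D:\exists x\in(b,w)\,\varphi(x,\bar y)\}$ if $\varphi(x,\bar y)$ has a witness above $b$, and $F(\bar y)=\max\{w\in D:\exists x\in(w,b]\,\varphi(x,\bar y)\}$ otherwise; these exist since bounded pieces of $D$ are finite. Choosing $c<d$ with $\ran(F)\cup\{b\}\subset(c,d)$ gives $(\dagger)$, and the one-sided version $F(\bar y)=\min\{w\in D:\exists x<w\,\varphi(x,\bar y)\}$ gives $(\dagger)^+$. The discreteness of the target $D$, which forces continuous pieces to be constant, is exactly what rules out the blow-up your argument cannot exclude.
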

\begin{proof}
If $M$ is o-minimal, then such an extension $M^*$ certainly exists, as o-minimality is preserved under elementary extensions. 
So, we can assume $M$ is not o-minimal, and one of the cases 1--3 in Remark \ref{remark-3-cases} is true.
Cases 1 and 2 can be treated similarly, so we assume case 1 holds; that is, $M$ is downward o-minimal but not upward o-minimal. 
As $M$ is not upward o-minimal, there exists a definable discrete set $D \subset M$ that is not upper-bounded.
\begin{claim}
$(\dagger)^+$ is satisfied in $M$.   
\end{claim}
Let $a \in M$ be arbitrary. 
First we prove the following statement by induction on $n$:
\begin{itemize}
    \item[$(*)_n$] Let $F:((-\infty,a)^M)^n \to D$ be a definable function. Then, $\ran(F)$ is upper-bounded. 
\end{itemize}
For the base case, let \(F : (-\infty,a)^M \to D\) be definable.  
Let \(X \subset (-\infty,a)^M \) be the set of discontinuities of \(F\).
By Fact \ref{pairwise-conti}, $X$ is a definable discrete set in $M$. 
Since $X$ is also definable in the o-minimal structure \( (-\infty,a)^M \) with augmented predicates (see Remark~\ref{rem-interval-structure}), $X$ is a finite set. 
Because \( D \) is discrete, on each interval where \( F \) is continuous,  
the function \( F \) must be constant.  
Consequently, \(\operatorname{ran}(F)\) is a finite set, and bounded above.
We now consider the case \( F : ((-\infty,a)^M)^{\,n+1} \to D \).  
For each \( n \)-tuple \( \bar b \in ((-\infty,a)^M)^n \), define
\[
   F_{\bar b}(y) = F(\bar b,y).
\]
Since \( F_{\bar b} \) maps \( (-\infty,a)^M \) into \( D \), the base case applies.  
Define
\[
   G(\bar b) = \max \bigl(\operatorname{ran}(F_{\bar b})\bigr).
\]
Then \( G : ((-\infty,a)^M)^n \to D \) is a definable function.  
By the induction hypothesis, \(\operatorname{ran}(G)\) is bounded above.  
It follows that \( \ran(F) \) is also bounded above.  
Hence, \((*)_n\) holds for all \( n \).

To derive a contradiction, we assume that $(\dagger)^+$ does not hold. 
Then, there is a formula $\varphi(x,\bar y)$ and $a \in M$ with the following properties:
\begin{enumerate}
    \item  $M \models \forall \bar y < a \exists x (\varphi(x, \bar y))$; 
    \item For all $d \in M$, $M \models \exists \bar y <a \forall x<d (\neg \varphi(x, \bar y))$.
\end{enumerate}
Let $n=|\bar y|$. For each $\bar b <a$, define 
\[
   F(\bar b) = \min \{\, w \in D : M \models \exists x < w \, (\varphi(x,\bar b)) \,\}.
\]
Such a minimum element $w \in D$ exists, since $M$ is downward o-minimal, and $D$ is discrete. 
By Property 2, $\ran(F)$ must be unbounded. However, this contradicts $(*)_n$ above. (End of Proof of Claim A).
\medbreak
For case 3, we assume that \(M\) is merely almost o-minimal; that is, \(M\) is almost o-minimal,  
 but neither downward o-minimal nor upward o-minimal.  
 Consequently, there exists a discrete definable set \(D \subset M\) such that \(D\) is unbounded both above and below.
We claim the following in this case. 
\begin{claim}\label{claim-b}
$(\dagger)$ is satisfied in $M$.   
\end{claim}

Let $a<b \in M$ be arbitrary. 
 We prove the following statement. 
 \begin{itemize}
 \item[$(**)_n$] Let $F:(a,b)^n \to D$ be a definable function.
 Then, $\ran(F)$ is a bounded set. 
 \end{itemize}
 $(n=1)$: $F:(a,b) \to D$. 
 Since \(M\) is locally o-minimal and definably complete,  
 the set of points of discontinuity of \(F\) is a discrete subset of $(a,b)$.  
 By almost o-minimality, this set is in fact finite.  
 On each interval where \(F\) is continuous, the discreteness of \(D\) implies that \(F\) must be constant on that interval.
 Thus, $\ran(F)$ is finite, and hence is a bounded set. 

 $(n+1)$: 
 Let \(F : (a,b)^{n+1} \to D\).  
 For each \(\bar c \in (a,b)^n\), define the function
 \[
   F_{\bar c} : (a,b) \to D, \quad F_{\bar c}(x) := F(\bar c, x).
\]
 By the base case, \(\operatorname{ran}(F_{\bar c})\) is finite.  
 Hence we may define definable functions \(G_0, G_1 : (a,b)^n \to D\) by
 \[
    G_0(\bar c) := \min(\operatorname{ran}(F_{\bar c})), \qquad
    G_1(\bar c) := \max(\operatorname{ran}(F_{\bar c})).
 \]
 Each \(G_i\) is definable, and by the induction hypothesis,  
 \(\operatorname{ran}(G_i)\) (for \(i=0,1\)) is bounded.  
 Therefore, \(\operatorname{ran}(F)\) is also bounded. $(**)_n$ was established. 
\medbreak
 Now we prove $(\dagger)$. Let $\varphi(x,\bar y)$ be a formula with $|\bar y|=n$ such that 
 
 \[M \models \forall \bar y \in (a,b)^n \; \exists x \; \varphi(x,\bar y).\]

We define a definable function $F:(a,b)^n \to D$ by:
 \[
 F(\bar y)=\begin{cases} 
 \min\{w \in D: \exists x \in (b,w) \varphi(x, \bar y)\} & \text{if $\exists x \in (b,\infty) \varphi(x,\bar y)$,} \\
 \max\{w \in D: \exists x \in (w,b] \varphi(x, \bar y)\} & \text{otherwise}.
 \end{cases}
\]
 By $(**)_n$, $\ran(F)$ is a bounded set.  Choose $c,d \in M$ such that $c< \ran(F) <d$.
 Then, we have $M \models \forall \bar y \in (a,b)^n \exists x \in (c,d) \varphi(x, \bar y)$.
 Thus, $(\dagger)$ holds in $M$. (End of Proof of Claim B).

\medbreak
For simplicity, we work under the assumption that Case 1 and Claim A hold.
Let \( a \in M \), and let \(\lambda\) be the cofinality of the segment \((-\infty,a)^M\).  
Let \( U \) be an ultrafilter on \(\lambda\) that contains all sets of the form \(\lambda \smallsetminus i\) for \( i < \lambda \).  
Choose an increasing sequence \(\{a_i : i < \lambda\}\) that converges to \(a\).  
Define $a^* = [(a_i)_{i<\lambda}] \in M^{(\lambda)}/U$. 
Then \( a^* \) is a new element that is infinitely close to \(a\).  
Hence, by Proposition \ref{prop-bounded} (1), \( M^* = M^{(\lambda)^+}/U \)  is a proper elementary extension of \(M\). 
Moreover, by Theorem \ref{thm-almost-o-mini}(1), \( M^* \) remains almost o-minimal.
\end{proof}

\end{document}